\def\Lip{{\rm Lip}}
\def\A{\mathcal A}
\def\B{\mathcal B}
\def\I{\mathcal I}
\def\L{\mathcal L}
\newtheorem{theorem}{Theorem}[section]
\newtheorem{lemma}[theorem]{Lemma}
\newtheorem{proposition}[theorem]{Proposition}
\newtheorem*{theo}{Theorem}
\newcounter{enumer}
\newtheorem{theoP}[enumer]{Theorem}
\newtheorem{propP}[enumer]{Proposition}
\theoremstyle{definition}
\newtheorem{definition}[theorem]{Definition}
\numberwithin{equation}{section}
\title{Lipschitz extensions of linear operators}
\author{Nahuel Albarrac\'{\i}n and Pablo Turco}
\address{IMAS - UBA - CONICET \\
Pab I,
Facultad de Cs. Exactas y Naturales \\Universidad de Buenos
Aires\\
Buenos Aires, Argentina}
\email{nalbarracin@dm.uba.ar}
\email{paturco@dm.uba.ar}
\keywords{Extension of linear operators, Nonlinear extension of linear operators, Operator ideals, Lipschitz operator ideals. }
\subjclass[2020] {Primary: 46A22, 46A32 Secondary: 47L20, 46B28}
\begin{document}

\begin{abstract} 
Let $E, F, E_0$ be Banach spaces, with $E_0$ a subspace of $E$. For a maximal Banach operator ideal $\mathcal A$, we show that a linear operator from $E_0$ to $F$ can be extended to a linear operator from $E$ to $F$ that belongs to $\mathcal A$ if and only if it can be extended to a Lipschitz map from $E$ to $F$ belonging to a wide class of Lipschitz Banach operator ideals related with $\mathcal A$. As a consequence, we show that linear operators with special Lipschitz factorization through $\ell_{\infty}(\Gamma)$ has analogous linear factorization through $\ell_{\infty}(\Gamma)$.

\end{abstract}

\maketitle

\section*{Introduction}
The extension problem of a function between two Banach spaces (or, more generally, between any two spaces) has been studied by many authors over time. For a function $f\colon X\rightarrow Y$ and a superspace $Z\supset X$, there always exists a function $\widetilde f\colon Z\rightarrow Y$ such that $\widetilde f|_X=f$ without any additional requirements. Naturally, the problem becomes more relevant when $\widetilde f$ is expected to retain certain properties of $f$.

One of the cornerstone theorems in functional analysis is the Hahn--Banach Theorem. As a consequence of this, any continuous linear function from a Banach space to $\mathbb R$ admits an extension to any superspace, which is also linear and continuous with the same norm. In the same spirit, McShane \cite{McS} shows that every real-valued Lipschitz function on a metric space admits a Lipschitz extension to any superspace while preserving the Lipschitz constant. However, even for real-valued functions, certain types of functions may not admit a desired extension. For example, there is no Hahn--Banach-type theorem for $n$-homogeneous polynomials defined over Banach spaces (see \cite{KR}).

For vector-valued functions, it is well known that the identity map of $c_0$ cannot be extended to a linear (and continuous) function from $\ell_{\infty}$ to $c_0$. Furthermore, while the canonical inclusion $J_{c_0}\colon c_0\rightarrow \ell_{\infty}$ can be extended to a linear operator from $\ell_{\infty}$ to $\ell_{\infty}$ (since $\ell_{\infty}$ is an injective Banach space), no such extension can be factorized through $c_0$. Thus, a linear function (the inclusion of $c_0$ into $\ell_{\infty}$) with a desired property (factorization through $c_0$) may admit an extension that lacks this property.

For the linear case, Dom\'anski \cite{Dom} studied extensions of linear operators from a {\it Banach operator ideal} perspective. Also, Castillo, Garc\'{\i}a and Suarez \cite{CGS} study cases where linear operators belonging to the ideal of compact, weakly compact, or approximable operators can be extended to a linear operator which belongs to the same ideal.

There is a characterization for maximal Banach operator ideals whose operators admit extensions within the same ideal, expressed in terms of their associated tensor norm. As a consequence of the Representation Theorem for Maximal Operator Ideals (see ,e.g. \cite[Theorem~17.5]{DF}), for a maximal Banach operator ideal $\A$ there exists a {\it finitely generated} tensor norm $\alpha$ such that for all Banach spaces $E$ and $F$, $\A(E,F')=(E \widehat{\otimes}_{\alpha} F)'$, where $F'$ stands for the dual space of $F$. If $\alpha$ satisfies the property that $E_0 \widehat{\otimes}_{\alpha} F$ is a subspace of $E \widehat{\otimes}_{\alpha} F$ whenever $E_{0}$ is a subspace of $E$ (i.e., $\alpha$ is {\it left-injective}), then an application of the Hahn--Banach theorem ensures that every linear operator in $\A(E_0,F)$ has an extension in $\A(E,F)$. For instance, since the injective norm $\varepsilon$ is left-injective and $\mathfrak I(E,F')=(E\widehat{\otimes}_{\varepsilon} F)'$, where $\mathfrak I$ is the Banach ideal of integral operators, it follows that for any Banach spaces $E_0, E$ and $F$, $E_0$ being a subspace of $E$, if $T\colon E_0\rightarrow F'$ is an integral operator, there exists an extension of $T$ from $E$ to $F'$ that is also an integral operator with the same integral norm.

In the nonlinear case, a similar approach has been taken for $n$-homogeneous polynomials by Carando, Kirwan, Lassalle, Ryan, and Zalduendo \cite{CaLa, CaZa, KR, Za}, among others. Recently, Ch\'avez-Dom\'{\i}nguez and Jim\'enez-Vargas \cite{ChaJi} explored conditions under which Lipschitz maps admit extensions within certain classes of Lipschitz maps.

Our starting point combines the work of Ch\'avez-Dom\'{\i}nguez and Jim\'enez-Vargas with the theory of linear operator extensions. Consider a linear operator $T\colon E_0\rightarrow F$ and suppose that, for a Banach space $E\supset E_0$, no linear extension of $T$ exists from $E$ to $F$. Since $T$ is linear, in particular is Lipschitz. So, could it admit a Lipschitz extension from $E$ to $F$? A partial answer follows from a theorem due to Lindenstrauss \cite{Lin} and Pe\l czy\'nski \cite{Pel}, specifically \cite[Theorem~7.2]{BL}:

\begin{theo}
Let $E, F$ be Banach spaces and $E_0$ a subspace of $E$. Let $T\in \L(E_0,F)$, and suppose there exists a Lipschitz function $f\colon E\rightarrow F$ such that $f|_{E_0}=T$. Then, there exists an operator $R\in \L(E,F'')$ with $R|_{E_0}=T$ and $\|R\| \leq \Lip(f)$.
\end{theo}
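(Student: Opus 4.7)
My plan is an averaging argument in two stages that exploits the amenability of the abelian additive groups $(E_0,+)$ and $(E,+)$. Let $\mu$ be a translation-invariant mean on $\ell^\infty(E_0)$ and $\nu$ a translation-invariant mean on $\ell^\infty(E)$. The bidual $F''$ enters naturally because elements produced by invariant means are specified by their action on $F'$.

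Stage 1 produces a Lipschitz map $R_1\colon E\to F''$ extending $T$ and already additive in $E_0$-directions. For $x\in E$ and $x^*\in F'$ define
$$\langle R_1(x),x^*\rangle := \mu_y\bigl(x^*(f(y+x)-f(y))\bigr).$$
Linearity in $x^*$ makes $R_1(x)$ an element of $F''$, and the bound $|x^*(f(y+x)-f(y+x'))|\le \Lip(f)\|x-x'\|\|x^*\|$ yields $\|R_1(x)-R_1(x')\|_{F''}\le \Lip(f)\|x-x'\|$. For $x\in E_0$ the integrand reduces to the constant $x^*(T(x))$ by linearity of $T$, so $R_1|_{E_0}=T$. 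Splitting $f(y+x+x_0)-f(y)=[f((y+x_0)+x)-f(y+x_0)]+[f(y+x_0)-f(y)]$ and applying translation-invariance of $\mu$ under $y\mapsto y+x_0$ (valid because $x_0\in E_0$) yields the partial additivity
$$R_1(x+x_0)=R_1(x)+T(x_0),\qquad x\in E,\ x_0\in E_0.$$

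Stage 2 linearizes $R_1$ via a second averaging, now on $E$. Define
$$\langle R(x),x^*\rangle := \nu_z\bigl(\langle R_1(z+x)-R_1(z),x^*\rangle\bigr).$$
The Lipschitz bound on $R_1$ gives $\|R(x)\|_{F''}\le \Lip(f)\|x\|$. The telescoping identity $R_1(z+x+x')-R_1(z)=[R_1((z+x')+x)-R_1(z+x')]+[R_1(z+x')-R_1(z)]$, combined with translation-invariance of $\nu$ under $z\mapsto z+x'$ applied to the first summand, shows that $R$ is additive on $E$; being additive and bounded, $R$ is $\mathbb{R}$-linear with $\|R\|\le \Lip(f)$. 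Finally, the partial additivity from Stage 1 implies $R_1(z+x_0)-R_1(z)=T(x_0)$ for every $z\in E$ and every $x_0\in E_0$, hence $R(x_0)=T(x_0)$, i.e.\ $R|_{E_0}=T$.

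The main conceptual hurdle is that neither mean alone suffices: averaging only over $E_0$ pins down $R|_{E_0}=T$ but yields additivity only in $E_0$-directions, while averaging only over $E$ gives full additivity but loses contact with the values of $f$ on $E_0$. Sequencing the two means as above reconciles both requirements, and the passage to $F''$ is forced by the fact that the output of a general invariant mean need not be represented by a point of $F$.
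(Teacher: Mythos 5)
Your argument is correct, and it takes a genuinely different route from the one the paper relies on. The paper quotes this statement from Benyamini--Lindenstrauss \cite[Theorem~7.2]{BL} and, in its generalization (Theorem~\ref{Thm: Main}), reproves it by the same pattern: convolve $f$ with a smooth bump on $E_0$ to force $g|_{E_0}=T$, smooth again in a complementary direction to obtain Gateaux differentiability at $0$, take the derivative there, and pass from finite-dimensional subspaces to all of $E$ via a weak-$*$ ultrafilter limit. Your proof replaces all of this with two invariant means on the discrete abelian groups $(E_0,+)$ and $(E,+)$: the first averaging pins $R_1$ to $T$ on $E_0$ and yields additivity in $E_0$-directions, the second upgrades this to full additivity, and boundedness then gives $\mathbb{R}$-linearity. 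Each step checks out --- the integrands are bounded by $\Lip(f)\|x\|\|x^*\|$ so the means apply, the telescoping plus translation-invariance computations are valid (translation by $x_0\in E_0$ is a legitimate group translation of $E_0$), and the constancy of $z\mapsto R_1(z+x_0)-R_1(z)$ for $x_0\in E_0$ correctly propagates the extension property through the second averaging. What your approach buys is brevity and the complete avoidance of differentiability, convolution, and any reduction to finite dimensions; what the paper's approach buys is control of an ideal norm $\|\cdot\|_{\I}$ throughout the construction via the tensor-norm duality, which is the point of its generalization. If you wanted to push your method into that setting you would have to verify that the convex combinations of translates $f^{y}$ implicit in the means stay in the ball of $\I$ and that the relevant weak-$*$ limits do too --- plausible for maximal, translation-invariant ideals, but it is precisely there that the paper's machinery (Lemmas~\ref{Lemma: IUT} and \ref{LemmaTranslationInvariance}) would reappear.
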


Thus, if $E$ and $F$ are Banach spaces with $F$ a dual space and $E_0$ is a subspace of $E$, a linear operator $T\colon E_0\rightarrow F$ has a Lipschitz extension to $E$ if and only if it has a linear extension. Now, suppose a linear operator admits a Lipschitz extension with some property. Could the linear extension share an {\it analogous property}? Unfortunately, the answer is no. Returning to the canonical inclusion $J_{c_0}\colon c_0\rightarrow \ell_{\infty}$, since there exists a Lipschitz map $r\colon \ell_{\infty} \rightarrow c_0$ with $r|_{c_0}=Id_{c_0}$ \cite[Example~1.5]{BL} (i.e., $c_0$ is a Lipschitz retract of $\ell_{\infty}$), the composition $J_{c_0}\circ r \colon \ell_{\infty} \rightarrow \ell_{\infty}$ is a Lipschitz extension of $J_{c_0}$ that factorizes through $c_0$. However, as noted earlier, no linear extension of $J_{c_0}$ can factorize through $c_0$.

In this article, we focus on cases where, if a linear operator admits a Lipschitz extension with some {\it extra property}, it also has a linear extension with an analogous property. Before proceeding, we must clarify the terms {\it property} and {\it analogous property}. We work with Banach Lipschitz operator ideals $\I$ (defined below), with norm denoted by $\|\cdot\|_{\I}$. If we consider a Banach Lipschitz operator ideal $\I$, from the definitions it follows that $\I\cap \L$, the set of all linear operators belonging to $\I$, is a Banach linear operator ideal with the same norm.

The main theorem of this article is a generalization of \cite[Theorem~7.2]{BL}, where we consider linear and Lipschitz operators belonging to a specific ideal. Our main theorem is:

\begin{theoP}\label{Thm: Main}
Let $E, F$ be Banach spaces and let $E_0$ be a subspace of $E$. Let $T\in \L(E_0,F)$ and assume that there exists a Lipschitz function $f\colon E\rightarrow F$ such that $f|_{E_0}=T$. If $f$ belongs to a maximal Banach Lipschitz operator ideal $\I$ that is invariant under translations, then there exists an operator $R\in \I\cap \L(E,F'')$ such that $R|_{E_0}=T$ and $\|R\|_{\I} \leq \|f\|_{\I}$.
\end{theoP}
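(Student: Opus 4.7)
The plan is to obtain $R$ by revisiting the proof of the quoted Lindenstrauss--Pe\l czy\'nski theorem---which already produces a linear $R\in\L(E,F'')$ extending $T$ with $\|R\|\le\Lip(f)$---and then to use the hypotheses that $\I$ is translation-invariant and maximal to upgrade $R$ into an element of $\I$ with $\|R\|_{\I}\le\|f\|_{\I}$. Concretely, the LP construction averages, via an invariant mean $\mathcal M$ on the abelian group $(E_0,+)$, the translated maps
\[
f_{x_0}\colon E\to F,\qquad f_{x_0}(x)\ =\ f(x+x_0)-f(x_0),\qquad x_0\in E_0,
\]
each of which satisfies $f_{x_0}(0)=0$, producing $R\colon E\to F''$ via $\langle R(x),y^*\rangle=\mathcal M_{x_0}\bigl[\langle f_{x_0}(x),y^*\rangle\bigr]$ for $x\in E$ and $y^*\in F'$. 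That this $R$ is linear, extends $T$ through the canonical embedding $J_F\colon F\hookrightarrow F''$, and satisfies $\|R\|\le\Lip(f)$ is exactly the content of the quoted theorem.

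The key new observation is that, by the translation invariance of $\I$, every $f_{x_0}$ lies in $\I(E,F)$ with $\|f_{x_0}\|_{\I}=\|f\|_{\I}$. Realizing $\mathcal M$ as the weak-$*$ limit of finitely supported probability measures $\mu_\alpha=\tfrac{1}{n_\alpha}\sum_{k=1}^{n_\alpha}\delta_{z_{\alpha,k}}$ on $E_0$ (Day's theorem), the corresponding partial averages
\[
R_\alpha(x)\ =\ \frac{1}{n_\alpha}\sum_{k=1}^{n_\alpha} f_{z_{\alpha,k}}(x),\qquad z_{\alpha,k}\in E_0,
\]
are convex combinations in the Banach space $\I(E,F)$, so $R_\alpha\in\I(E,F)$ with $\|R_\alpha\|_{\I}\le\|f\|_{\I}$ and $\langle R(x),y^*\rangle=\lim_\alpha\langle R_\alpha(x),y^*\rangle$ pointwise in $(x,y^*)$.

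The main obstacle is the final passage to the limit: one must show that the pointwise weak-$*$ limit in $F''$ of an $\|\cdot\|_{\I}$-bounded net in $\I(E,F)$ again lies in $\I(E,F'')$, with norm at most the $\liminf$. This is the Lipschitz analogue of the familiar fact that a maximal linear Banach operator ideal is closed under weak-$*$ limits into the bidual, which in the linear case is a consequence of the Representation Theorem for maximal ideals mentioned in the introduction. I expect the cleanest route to be to extract this closure property directly from the paper's definition of a maximal Banach Lipschitz operator ideal---most plausibly by reducing to finite pointed subsets of $E$ and invoking weak-$*$ lower semicontinuity of $\|\cdot\|_{\I}$ on finite-dimensional data, along the lines of the standard ``finite hull'' characterization of maximality. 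Once this principle is in place, applying it to the net $(R_\alpha)$ yields $R\in\I\cap\L(E,F'')$ with $\|R\|_{\I}\le\liminf_\alpha\|R_\alpha\|_{\I}\le\|f\|_{\I}$, which is the desired conclusion.
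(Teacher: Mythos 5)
There is a genuine gap at the heart of your construction: the map $R$ produced by averaging the translates $f_{x_0}$, $x_0\in E_0$, against an invariant mean on $(E_0,+)$ is \emph{not} linear on $E$. Writing $\langle R(x),y^*\rangle=\mathcal M_{z}\big[\langle f(x+z)-f(z),y^*\rangle\big]$ and splitting $f(x_1+x_2+z)-f(z)$ as $\big(f(x_1+(x_2+z))-f(x_2+z)\big)+\big(f(x_2+z)-f(z)\big)$, the invariance of $\mathcal M$ lets you absorb the shift by $x_2$ only when $x_2\in E_0$; for general $x_2\in E$ the mean on $E_0$ gives no cancellation. So all you get is $R(x_1+x_2)=R(x_1)+R(x_2)$ for $x_1\in E$, $x_2\in E_0$, i.e.\ additivity in the $E_0$-directions. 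In the degenerate case $E_0=\{0\}$ your $R$ is just $f-f(0)$, which is certainly not linear in general. The quoted Lindenstrauss--Pe\l czy\'nski theorem is not proved by this averaging; its proof (and the paper's) is a differentiation argument: in finite dimensions one convolves $f$ with a symmetric bump on $E_0$ (this preserves the values $T$ on $E_0$ and creates directional derivatives in $E_0$-directions), then convolves with bumps of shrinking support in an algebraic complement $E_1$, obtains maps $h_n$ that are Gateaux differentiable at $0$, and takes $Dh_n(0)$ --- which is genuinely linear --- followed by a weak-$*$ limit; the infinite-dimensional case is then assembled over $\mathrm{FIN}(E)$ with an ultrafilter. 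Linearity transverse to $E_0$ is exactly what your single mean cannot produce.

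Your two auxiliary points are sound but are deployed differently in the paper. The weak-$*$ lower semicontinuity of the maximal ideal norm under pointwise weak-$*$ limits into a bidual is correct (it follows from the representation $\I(X,E')=(X\widehat\boxtimes_\alpha E)'$, since the pairing only sees pointwise values), and the paper uses it --- but applied to the derivatives $Dh_n(0)$ and to the finite-dimensional pieces $R_G$, not to convex combinations of translates. Translation invariance of $\I$ does enter, but its role is to show that the convolved maps $g$ and $h_n$ remain in $\I$ with $\|\cdot\|_{\I}\le\|f\|_{\I}$, via the identity $\alpha\big(\sum_i\delta_{x_i+x_0,\,y_i+x_0}\boxtimes z_i\big)=\alpha\big(\sum_i\delta_{x_i,y_i}\boxtimes z_i\big)$ for the associated Lipschitz tensor norm, together with the fact (from the maximality of $\I$) that the Gateaux derivative at $0$ of a member of $\I$ lies in $\I\cap\L$ with no increase of norm. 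To repair your argument you would need a second averaging over the quotient group $E/E_0$ to restore additivity in the transverse directions, which pushes the target to $F''''$ and requires projecting back; as written, the proposal does not prove the theorem.
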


It is worth mentioning that, under the assumptions of the above theorem, the operator $T$ must belong to the ideal $\I \cap \L$. As a consequence of Theorem~\ref{Thm: Main}, we obtain:

\begin{theoP}\label{Thm: coro}
Let $E, F$ be Banach spaces and let $E_0$ be a subspace of $E$. Let $\A$ be a maximal Banach operator ideal and $T\in \A(E_0,F')$. The following are equivalent:
\begin{enumerate}
\item There exists $R\in \A(E,F')$ such that $R|_{E_0}=T$ and $\|R\|_{\A}=\|T\|_{\A}$.
\item For every maximal Banach Lipschitz operator ideal $\I$ invariant under translations such that $\I\cap \L=\A$, there exists a Lipschitz function $f\in \I(E,F')$ such that $f|_{E_0}=T$ and $\|f\|_{\I}=\|T\|_{\A}$.
\item\label{item} There exists a maximal Banach Lipschitz operator ideal $\I$ invariant under translations such that $\I\cap \L=\A$ and a Lipschitz function $f\in \I(E,F')$ with $f|_{E_0}=T$ and $\|f\|_{\I}=\|T\|_{\A}$.
\end{enumerate}
\end{theoP}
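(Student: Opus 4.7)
The plan is to prove the cyclic implications $(1) \Rightarrow (2) \Rightarrow (3) \Rightarrow (1)$. The first two implications will be essentially bookkeeping exercises exploiting the hypothesis $\I \cap \L = \A$, while the substantive step is $(3) \Rightarrow (1)$, which I will obtain by applying Theorem~\ref{Thm: Main} and then post-composing with the canonical projection $F''' \to F'$.

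For $(1) \Rightarrow (2)$, given $R \in \A(E,F')$ extending $T$ with $\|R\|_\A = \|T\|_\A$, I will take $f := R$ viewed as a Lipschitz map. Because the excerpt notes that $\I \cap \L$ is a Banach linear operator ideal with the same norm as $\I$, the hypothesis $\I \cap \L = \A$ yields $\|f\|_\I = \|R\|_\A = \|T\|_\A$. The implication $(2) \Rightarrow (3)$ is then immediate, provided at least one such $\I$ is available (which is built into the framework of the paper).

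For $(3) \Rightarrow (1)$, given $\I$ and $f \in \I(E,F')$ as in (3), I will apply Theorem~\ref{Thm: Main} with target $F'$ to obtain $R_0 \in \I \cap \L(E,F''')$ with $R_0|_{E_0} = J_{F'} \circ T$ and $\|R_0\|_\I \leq \|f\|_\I = \|T\|_\A$. Because $\I \cap \L = \A$ with equal norms, $R_0 \in \A(E,F''')$ and $\|R_0\|_\A \leq \|T\|_\A$. At this point I invoke the standard fact that $F'$ is $1$-complemented in its bidual $F'''$ via the canonical projection $p := (J_F)'\colon F''' \to F'$, which satisfies $p \circ J_{F'} = \mathrm{id}_{F'}$. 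Setting $R := p \circ R_0$, the ideal axioms give $R \in \A(E,F')$ with $\|R\|_\A \leq \|p\| \cdot \|R_0\|_\A \leq \|T\|_\A$, and the computation $R|_{E_0} = p \circ J_{F'} \circ T = T$ shows that $R$ extends $T$. Since restriction to a subspace never increases the $\A$-norm, the reverse inequality $\|T\|_\A \leq \|R\|_\A$ is automatic, yielding equality.

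The main technical point I expect to have to handle carefully is the bidual reduction $F''' \to F'$: Theorem~\ref{Thm: Main} naturally outputs an operator into the bidual of the target, so I must use that $F'$ is $1$-complemented in $F'''$ via $(J_F)'$ to land in $\A(E,F')$ with exactly the norm $\|T\|_\A$, not merely a bound up to a factor. Beyond this, the argument is a careful tracking of norms through the ideal axioms and the identification $\I \cap \L = \A$.
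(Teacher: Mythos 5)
Your proposal follows the same route as the paper's proof and is mostly correct. The implication $(1)\Rightarrow(2)$ is handled identically, and your treatment of $(3)\Rightarrow(1)$ is actually more explicit than the paper's: the paper simply invokes Theorem~\ref{Thm: Main}, whereas you spell out the reduction from $F'''$ back to $F'$ via the norm-one projection $(J_F)'$ satisfying $(J_F)'\circ J_{F'}=\mathrm{id}_{F'}$, together with the reverse inequality $\|T\|_{\A}\leq\|R\|_{\A}$ coming from restricting to $E_0$. This is exactly the point where the hypothesis that the target is a dual space is used, and isolating it is a genuine improvement in transparency over the paper's one-line argument.

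The one place where your argument is incomplete is $(2)\Rightarrow(3)$. This implication is not pure bookkeeping: its entire content is the existence of at least one maximal Banach Lipschitz operator ideal $\I$, invariant under translations, with $\I\cap\L=\A$. You assert that such an ideal is ``built into the framework of the paper,'' but without producing a witness the statement $(2)$ could in principle hold vacuously while $(3)$ fails, so the implication would collapse. The paper supplies the witness concretely: $\I=\A\circ\Lip_0$, which is maximal by \cite[Theorem~4.1]{TV}, invariant under translations by \cite[Example~4.4]{AlTur}, and satisfies $\I\cap\L=\A$ by \cite[Theorem~3.3]{TV}. You should either cite this construction or prove the three required properties; as written, this step is a genuine gap, albeit one you correctly identified as the crux of $(2)\Rightarrow(3)$.
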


In summary, a linear operator with values in a dual space that belongs to a maximal Banach operator ideal has an extension in the same ideal if and only if it has a Lipschitz extension in some maximal Banach Lipschitz operator ideal related to the linear ideal. In Section~\ref{Sec: Main}, we provide all necessary definitions and preliminaries and prove Theorems \ref{Thm: Main} and \ref{Thm: coro}. In Theorem \ref{Thm: Main} and \ref{Thm: coro} there are some assumptions on the Lipschitz ideal: to be maximal and invariant under translations. In Section~\ref{Sec: Improve} we give a brief discussion about this two assumptions and discuss possible improvements of Theorems~\ref{Thm: Main} and \ref{Thm: coro}.

Before proceeding, let us mention a direct consequence of Theorem~\ref{Thm: Main}. Johnson, Maurey, and Schechtman obtained a linear factorization through a fixed Banach space $Z$ for a linear operator, assuming a Lipschitz factorization through the same space $Z$ and a differentiability condition on one of the Lipschitz maps involved \cite[Theorem~1]{JMS}. The following proposition shows that if a linear operator $T\colon E\rightarrow F'$ can be factorized via two Lipschitz maps through $\ell_{\infty}(B_{E'})$, no differentiability condition is needed.

\begin{propP}\label{Prop: Facto}
Let $E$ and $F$ be Banach spaces and $T\colon E\rightarrow F'$ be a linear operator. Suppose there exist two Lipschitz functions $f\colon E\rightarrow \ell_{\infty}(B_{E'})$ and $g\colon \ell_{\infty}(B_{E'})\rightarrow F'$, the latter belonging to a maximal Banach Lipschitz operator ideal $\I$ that is invariant under translations, such that $T=g\circ f$. Then, there exist linear operators $S\colon E\rightarrow \ell_{\infty}(B_{E'})$ and $R\colon \ell_{\infty}(B_{E'})\rightarrow F'$ in $\I\cap \L$ such that $T=R\circ S$ and $\|R\|_{\I} \|S\|\leq \Lip(f) \|g\|_\I$.
\end{propP}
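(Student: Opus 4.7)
The plan is to take $S\colon E\to \ell_{\infty}(B_{E'})$ to be the canonical linear isometric embedding $S(x)=(\phi(x))_{\phi\in B_{E'}}$, so that $\|S\|=1$, and to build $R$ by first Lipschitz-extending $f$ across $S$ and then linearizing via Theorem~\ref{Thm: Main}. This reduces the given Lipschitz factorization $T=g\circ f$ to a Lipschitz extension problem on $\ell_{\infty}(B_{E'})$, which is tractable because $\ell_{\infty}(\Gamma)$ is an absolute Lipschitz retract (coordinate-wise McShane extension).

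In more detail, I would set $E_0:=S(E)$ and $T_0:=T\circ S^{-1}\colon E_0\to F'$, and note that $f\circ S^{-1}\colon E_0\to \ell_{\infty}(B_{E'})$ is Lipschitz with constant at most $\Lip(f)$. Applying McShane's theorem coordinate by coordinate produces a Lipschitz extension $\widetilde f\colon \ell_{\infty}(B_{E'})\to \ell_{\infty}(B_{E'})$ of $f\circ S^{-1}$ with $\Lip(\widetilde f)\le \Lip(f)$. I then let $h:=g\circ \widetilde f\colon \ell_{\infty}(B_{E'})\to F'$; by the ideal property of $\I$, $h\in \I$ with $\|h\|_{\I}\le \|g\|_{\I}\Lip(f)$, and $h$ extends $T_0$ since $h(S(x))=g(f(x))=T(x)$ for every $x\in E$.

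Next I would apply Theorem~\ref{Thm: Main} to the linear map $T_0$ with Lipschitz extension $h\in \I$, obtaining $\widehat R\in \I\cap \L(\ell_{\infty}(B_{E'}),F''')$ extending $T_0$ (viewed via the canonical embedding $j_{F'}\colon F'\to F'''$) with $\|\widehat R\|_{\I}\le \|h\|_{\I}$. Composing with the canonical norm-one projection $\pi:=(j_F)'\colon F'''\to F'$, which satisfies $\pi\circ j_{F'}=\mathrm{Id}_{F'}$, I would define $R:=\pi\circ \widehat R$. Then $R\in \I\cap \L(\ell_{\infty}(B_{E'}),F')$ with
\[
\|R\|_{\I}\,\|S\|\le \|\pi\|\,\|\widehat R\|_{\I}\le \|h\|_{\I}\le \Lip(f)\,\|g\|_{\I},
\]
and for every $x\in E$ we get $R(S(x))=\pi(j_{F'}(T_0(S(x))))=T(x)$, so $T=R\circ S$, as required.

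The main subtlety is the mismatch in target space: Theorem~\ref{Thm: Main} linearizes into the bidual, which here is $F'''$, whereas the statement requires $R$ to land in $F'$. This is resolved by exploiting that $F'$ is already a dual, hence a norm-one complemented subspace of $F'''$ via $\pi$; the rest is a straightforward diagram-chase combining the isometric embedding $S$, the coordinate-wise McShane extension of $f$, and Theorem~\ref{Thm: Main}.
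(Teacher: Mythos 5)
Your proof is correct and follows essentially the same route as the paper's: embed $E$ isometrically into $\ell_{\infty}(B_{E'})$, use the Lipschitz injectivity of $\ell_{\infty}(\Gamma)$ (your coordinate-wise McShane argument is precisely the proof of the cited \cite[Lemma 1.1]{BL}) to extend $f$, and then linearize the resulting Lipschitz extension of $T$ via Theorem~\ref{Thm: Main}. If anything you are more careful than the paper, which silently identifies the bidual target $F'''$ produced by Theorem~\ref{Thm: Main} with $F'$; your explicit composition with the norm-one projection $(j_F)'\colon F'''\to F'$ fills that small gap.
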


\begin{proof}
Denote by $\iota_E \colon E \hookrightarrow \ell_{\infty}(B_{E'})$ the canonical inclusion. Take the linear operator $T$ as in the statement. Since $\ell_{\infty}(B_{E'})$ is a Lipschitz injective Banach space (see, e.g. \cite[Lemma 1.1]{BL}), there exists a Lipschitz map $\widetilde f\colon \ell_{\infty}(B_{E'}) \rightarrow \ell_{\infty}(B_{E'})$ with $\Lip(\widetilde f)=\Lip(f)$ such that $f=\widetilde f \circ \iota_E$. Then, $g\circ \widetilde f \in \I(\ell_{\infty}(B_{E'}),F')$ is an extension of $T$. Applying Theorem~\ref{Thm: Main}, there exists a linear operator $R\in \I\cap \L(\ell_{\infty}(B_{E'}),F')$ with $\|R\|_{\I}\leq \|g\circ \widetilde f\|_{\I}\leq \|g\|_{\I} \Lip(f)$ such that $R\circ \iota_E=T$, completing the proof.
\end{proof}

\section{Preliminaries and main theorems}\label{Sec: Main}

From now on, $X$ and $Y$ will denote pointed metric spaces and their distinguished points will always be denoted by $0$. We will use $E$ and $F$ to denote real Banach spaces. Every Banach space can be considered as a pointed metric space with distinguished point $0$. The open unit ball of $E$ will be denoted by $B_E$.

A map $f\colon E\rightarrow F$ is called a \textit{Lipschitz map} if there exists a constant $C>0$ such that $\|f(x_1)-f(x_2)\|\leq C \|x_1-x_2\|$ for all $x_1, x_2 \in E$. The smallest such constant is denoted by $\Lip(f)$. We denote by $\Lip_0(E,F)$ the set of all Lipschitz maps from $E$ to $F$ that vanish at $0$.

By a \textit{Banach Lipschitz operator ideal}, we mean a subclass $\I_{\Lip}$ of $\Lip_0$ such that for every pointed metric space $X$ and every Banach space $E$, the component
$$
\I_{\Lip}(X,E)=\Lip_0(X,E)\cap \I_{\Lip}
$$
satisfies:
\begin{enumerate}[\upshape (i)]
\item $\I_{\Lip}(X,E)$ is a linear subspace of $\Lip_0(X,E)$.
\item $Id_\mathbb{R} \in \I_{\Lip}(\mathbb{R},\mathbb{R})$.
\item Ideal property: if $g \in \Lip_0(Y,X)$, $f\in \I_{\Lip}(X,E)$ and $S \in \L(E,F)$, then the composition $S\circ f\circ g\in \I_{\Lip}(Y,F)$.
\end{enumerate}
Additionally, there is a \textit{Lipschitz ideal norm} on $\I_{\Lip}$, given by a function $\|{\cdot}\|_{\I_{\Lip}}\colon \I_{\Lip}\rightarrow [0,+\infty)$ that satisfies:

\begin{enumerate}[\upshape (i')]
\item For every pointed metric space $X$ and every Banach space $E$, the pair $\big(\I_{\Lip}(X,E);\|{\cdot}\|_{\I_{\Lip}}\big)$ is a Banach space and $\Lip(f)\leq \|f\|_{\I_{\Lip}}$ for all $f \in \I_{\Lip}(X,E)$.
\item $\|{Id_{\mathbb{R}} \colon \mathbb{R}\rightarrow \mathbb{R}}\|_{\I_{\Lip}}=1$.
\item If $g \in \Lip_0(Y,X)$, $f\in \I_{\Lip}(X,E)$ and $S \in \L(E,F)$, then $\|S\circ f\circ g\|_{\I_{\Lip}} \leq \Lip(g) \|f\|_{\I_{\Lip}} \|S\|$.
\end{enumerate}

This definition was introduced in \cite[Definition~2.1]{ARSPY} and independently in \cite[Definition~2.3]{CPCDJVVV}, under the name of \textit{generic Lipschitz operator Banach ideal}, and extends the definition of Banach linear operator ideals. As we already mention, for a Banach Lipschitz operator ideal $\I$, the class of linear operators that belong to $\I$ will be denoted by $\I \cap \L$. It is straightforward to see that $\I \cap \L$ is a Banach operator ideal endowed with the norm $\|\cdot\|_\I$. For the basics of Banach linear operator ideals, we refer to Pietsch's book \cite{Pie}.

Throughout the manuscript, a Banach operator ideal will refer to a Banach \textit{linear} operator ideal. To avoid confusion, we use $\I$ for Banach Lipschitz operator ideals and $\A$ for Banach operator ideals. The notation $\A\subset \B$ means that for all Banach spaces $E$ and $F$, $\A(E,F)\subset \B(E,F)$ and $\|\cdot\|_{\B}\leq \|\cdot\|_{\A}$. The same applies to Lipschitz operator ideals.

As in the linear case, for a Banach Lipschitz operator ideal $\I$, there exists the \textit{maximal hull} of $\I$, $\I^{\max}$. Following \cite[Lemma~3.3]{CPCDJVVV2}, a Lipschitz function $f\colon X\rightarrow E$ belongs to $\I^{\max}(X,E)$ if 
$$
\|f\|_{\I^{\max}} \colon= \sup \|Q \circ f \circ \iota\|_{\I(X_0,E/M)} < \infty,
$$
where the supremum is taken over all finite metric subspaces $X_0$ of $X$, all cofinite-dimensional subspaces $M$ of $E$, $\iota\colon X_0\rightarrow X$ is the inclusion, and $Q$ is the quotient map from $E$ to $E/M$.

There exists a theory of Lipschitz tensor norms, developed in \cite{CPCDJVVV}. Let $X$ be a pointed metric space and $E$ a Banach space. For $x, y \in X$ and $z\in E$, define $\delta_{x,y} \boxtimes z\colon \Lip_0(X,E')\rightarrow \mathbb{R}$ by
$$
(\delta_{x,y} \boxtimes z)(f) := (f(x)-f(y))(z)
$$ 
and 
$$
X\boxtimes E := \operatorname{Span} \{\delta_{x,y} \boxtimes z \colon x,y\in X, z \in E\} \subset \Lip_0(X,E')'.$$
 
For a \textit{finitely generated Lipschitz cross-norm} $\alpha$ (see \cite{CPCDJVVV} for the definition), we denote by $X\widehat{\boxtimes}_{\alpha} E$ the completion of $X \boxtimes E$ with respect to the norm $\alpha$. 

There is also a \textit{Representation Theorem for Lipschitz Maximal Operator Ideals} \cite[Corollary~5.2]{CPCDJVVV2}. For a maximal Banach Lipschitz operator ideal $\I$ there exists a finitely generated Lipschitz cross-norm $\alpha$ such that for every pointed metric space $X$ and every Banach space $E$,
\begin{equation}\label{eq1}
\I(X,E') = (X\widehat{\boxtimes}_{\alpha} E)'.
\end{equation}
In this case, we say that $\I$ and $\alpha$ are associated by duality. For $f \in \I(X,E')$ and $u = \sum_{i=1}^{n} \delta_{x_i,y_i} \boxtimes z_i \in X \boxtimes E$, the representation in \eqref{eq1} is given by
$$
\langle f, u \rangle = \sum_{i=1}^{n} (f(x_i) - f(y_i))(z_i).
$$

For a Lipschitz map $f \in \Lip_0(E,F)$ and $x_0 \in E$, we denote by $f^{x_0}(\cdot) \colon= f(\cdot + x_0) - f(x_0)$ the translation of $f$. If $X \subset E$ is a subset, we write $X - x_0 = \{x - x_0 \colon x \in X\}$. We need the following definition:

\begin{definition}
A Banach Lipschitz operator ideal $\I$ is \textit{invariant under translations} if for all Banach spaces $E$ and $F$, every subset $X \subset E$ with $0 \in X$, every $x_0 \in X$, and every Lipschitz map $f \colon X \rightarrow F$, we have $f \in \I(X,F)$ if and only if $f^{x_0} \in \I(X - x_0,F)$, with $\|f\|_{\I} = \|f^{x_0}\|_{\I}$.
\end{definition}

Note that if $\I$ is invariant under translations, then for every Banach spaces $E$ and $F$ and every $x_0 \in E$, the map $\phi \colon \I(E,F) \rightarrow \I(E,F)$ given by $\phi(f) = f^{x_0}$ is an isometric isomorphism, since $f = (f^{x_0})^{-x_0}$.

We do not have an example of a Banach Lipschitz operator ideal that is not invariant under translations. We will discuss this further in the next section.

\begin{lemma}\label{Lemma: IUT}
Let $\I$ be a Banach Lipschitz operator ideal. If $\I$ is invariant under translations, then $\I^{\max}$ is also invariant under translations.
\end{lemma}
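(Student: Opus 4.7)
The plan is to prove both inequalities $\|f^{x_0}\|_{\I^{\max}} \le \|f\|_{\I^{\max}}$ and its reverse directly from the definition of the maximal hull, by transporting each test quantity on the $(X-x_0)$-side to a corresponding one on the $X$-side using the invariance hypothesis on $\I$ itself (which applies only to finite, Banach-space-inherited data, which is exactly what appears inside the supremum defining $\I^{\max}$).

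Fix $X \subset E$ with $0 \in X$, a Lipschitz $f \colon X \to F$ with $f(0)=0$, and $x_0 \in X$. To bound $\|f^{x_0}\|_{\I^{\max}}$ from above, take an arbitrary finite pointed metric subspace $Y_0 \subset X - x_0$ (so $0 \in Y_0$) and a cofinite-dimensional subspace $M \subset F$ with quotient $Q \colon F \to F/M$. The key observation is the pointwise identity
$$
Q \circ f^{x_0} = (Q\circ f)^{x_0} \qquad \text{on } X-x_0,
$$
which is immediate from linearity of $Q$. I would then introduce the finite pointed subspace
$$
X_0 := (Y_0 + x_0) \cup \{0\} \subset X.
$$
This choice is engineered to satisfy three properties at once: $0 \in X_0$ (so $X_0$ is admissible in the supremum defining $\|f\|_{\I^{\max}}$); $x_0 \in X_0$ (so translating by $x_0$ is meaningful); and $Y_0 \subset X_0 - x_0$ (so the restriction step below is legal).

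Now apply the invariance hypothesis to $(Q\circ f)|_{X_0} \in \I(X_0, F/M)$ at the point $x_0$:
$$
\bigl\| \bigl((Q\circ f)|_{X_0}\bigr)^{x_0} \bigr\|_{\I(X_0 - x_0, F/M)} = \bigl\| (Q\circ f)|_{X_0} \bigr\|_{\I(X_0, F/M)} \le \|f\|_{\I^{\max}}.
$$
By the pointwise identity together with the ideal property applied to the $1$-Lipschitz inclusion $Y_0 \hookrightarrow X_0 - x_0$, the left-hand side dominates $\|(Q\circ f^{x_0})|_{Y_0}\|_{\I(Y_0, F/M)}$. Taking the supremum over all such $Y_0$ and $M$ yields $\|f^{x_0}\|_{\I^{\max}} \le \|f\|_{\I^{\max}}$.

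For the reverse inequality, I would apply the same argument to the pair $(f^{x_0}, -x_0)$ in place of $(f, x_0)$, noting that $-x_0 \in X-x_0$ and that $(f^{x_0})^{-x_0} = f$ (using $f(0)=0$). I expect the only real obstacle is the bookkeeping around the distinguished point: one cannot simply take $X_0 = Y_0 + x_0$ because that set need not contain $0$ and so would not appear in the supremum defining $\I^{\max}$; enlarging it by $\{0\}$ is harmless precisely because $0 = (-x_0) + x_0$ keeps $Y_0$ inside $X_0 - x_0$, and the ideal property absorbs the restriction step without cost.
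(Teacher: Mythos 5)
Your proposal is correct and follows essentially the same route as the paper's proof: your auxiliary set $X_0=(Y_0+x_0)\cup\{0\}$ is exactly the paper's $\widetilde Y+x_0$ with $\widetilde Y=Y_0\cup\{-x_0\}$, and you use the same three ingredients (the identity $Q\circ f^{x_0}=(Q\circ f)^{x_0}$, the ideal property for the inclusion $Y_0\hookrightarrow X_0-x_0$, and the reduction of the reverse inequality to $f=(f^{x_0})^{-x_0}$). Nothing further is needed.
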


\begin{proof}
Since $f = (f^{x_0})^{-x_0}$, it is enough to show that for Banach spaces $E$ and $F$, $X$ a subset of $E$ with $0 \in X$, and $x_0 \in X$, if $f \in \I^{\max}(X,F)$, then $f^{x_0} \in \I^{\max}(X - x_0,F)$ with $\|f^{x_0}\|_{\I^{\max}} \leq \|f\|_{\I^{\max}}$. 

Let $M$ be a cofinite-dimensional subspace of $F$ and $Y \subset X - x_0$ a finite metric space. Let $\iota_Y^{X - x_0}$ denote the inclusion of $Y$ into $X - x_0$, and $Q$ the quotient map from $F$ to $F/M$. Let $\widetilde{Y} = Y \cup \{-x_0\}$ and note that $\widetilde{Y} \subset X - x_0$. Since $\I$ is invariant under translations, we have
\[
\begin{aligned}
\left\|Q \circ f^{x_0} \circ \iota_Y^{X - x_0}\right\|_{\I(Y,F/M)} &\leq \left\|Q \circ f^{x_0} \circ \iota_{\widetilde{Y}}^{X - x_0}\right\|_{\I(\widetilde{Y},F/M)} \\
&= \left\|\left(Q \circ f \circ \iota_{\widetilde{Y} + x_0}^X\right)^{x_0}\right\|_{\I(\widetilde{Y},F/M)} \\
&= \left\|Q \circ f \circ \iota_{\widetilde{Y} + x_0}^X\right\|_{\I(\widetilde{Y} + x_0, F/M)} \\
&\leq \|f\|_{\I^{\max}(X,F)}.
\end{aligned}
\]
The last inequality holds because $M$ is cofinite-dimensional in $F$ and $\widetilde{Y} + x_0$ is a finite subset of $X$. Thus, $\|f^{x_0}\|_{\I^{\max}(X - x_0,F)} \leq \|f\|_{\I^{\max}(X,F)}$. 
\end{proof}

For the proof of Theorem~\ref{Thm: Main}, we will need the following lemma:

\begin{lemma}\label{LemmaTranslationInvariance}
Let $\I$ be a maximal Banach Lipschitz operator ideal that is invariant under translations, and let $\alpha$ be the finitely generated tensor norm associated to $\I$ by duality. For any Banach spaces $E$ and $F$, elements $x_1, y_1, \dots, x_n, y_n \in E$, and $z_1, \dots, z_n \in F$, we have
\[
\alpha\left(\sum_{i=1}^n \delta_{x_i + x_0, y_i + x_0} \boxtimes z_i\right) = \alpha\left(\sum_{i=1}^n \delta_{x_i, y_i} \boxtimes z_i\right)
\]
for all $x_0 \in E$.
\end{lemma}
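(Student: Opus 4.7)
The plan is to translate the claimed equality of $\alpha$-norms into a statement about Lipschitz maps via the duality $\I(E,F')=(E\widehat{\boxtimes}_\alpha F)'$ supplied by the Representation Theorem for Maximal Lipschitz Operator Ideals, and then to identify translation of the tensor with translation of the Lipschitz map $f \mapsto f^{x_0}$.

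First, from \eqref{eq1} and Hahn--Banach, the $\alpha$-norm of any $w \in E \boxtimes F$ can be computed as
$$\alpha(w) = \sup\{|\langle f, w\rangle| : f \in \I(E,F'),\ \|f\|_{\I} \leq 1\}.$$
Thus, setting $u = \sum_{i=1}^n \delta_{x_i,y_i} \boxtimes z_i$ and $v = \sum_{i=1}^n \delta_{x_i+x_0, y_i+x_0} \boxtimes z_i$, it suffices to show that the two corresponding suprema, taken over the \emph{same} unit ball of $\I(E,F')$, coincide.

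Next, I would exploit that the $f(x_0)$ contributions telescope inside the pairing: for each $f \in \I(E,F')$,
\begin{align*}
\langle f, v \rangle
&= \sum_{i=1}^n \big(f(x_i+x_0) - f(y_i+x_0)\big)(z_i) \\
&= \sum_{i=1}^n \big(f^{x_0}(x_i) - f^{x_0}(y_i)\big)(z_i)
= \langle f^{x_0}, u \rangle.
\end{align*}
This is the crucial linking identity, and it requires nothing more than the definition of $f^{x_0}$ and of the pairing.

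Finally, I would invoke the hypothesis that $\I$ is invariant under translations, which (applied with $X=E$) gives that $f \mapsto f^{x_0}$ is an isometric isomorphism of $\I(E,F')$ onto itself, mapping the closed unit ball bijectively onto itself. Taking the supremum of $|\langle f, v\rangle| = |\langle f^{x_0}, u\rangle|$ over $\|f\|_{\I} \leq 1$ on both sides then gives $\alpha(v)=\alpha(u)$. No real obstacle is expected; the only point needing care is to confirm that the translation-invariance hypothesis legitimately applies with $X=E$ (i.e.\ treating the whole Banach space as the "subset containing $0$"), which is exactly what the definition of invariance under translations permits.
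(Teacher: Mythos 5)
Your proposal is correct and follows essentially the same route as the paper: both compute $\alpha$ as a supremum of pairings over the unit ball of $\I(E,F')$ via the Representation Theorem, use the identity $\langle f,\sum_i \delta_{x_i+x_0,y_i+x_0}\boxtimes z_i\rangle=\langle f^{x_0},\sum_i\delta_{x_i,y_i}\boxtimes z_i\rangle$, and conclude from the fact that translation invariance makes $f\mapsto f^{x_0}$ an isometric bijection of that unit ball. Your write-up merely makes explicit the bijectivity step that the paper leaves implicit.
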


\begin{proof}
Fix $x_0 \in E$. By the Representation Theorem for Maximal Banach Lipschitz Operator Ideals and since $\I$ is invariant under translations, we have
\[
\begin{aligned}
\alpha\left(\sum_{i=1}^n \delta_{x_i + x_0, y_i + x_0} \boxtimes z_i\right) &= \sup_{f \in B_{\I(E,F')}} \left|\left\langle f, \sum_{i=1}^n \delta_{x_i + x_0, y_i + x_0} \boxtimes z_i \right\rangle\right| \\
&= \sup_{f \in B_{\I(E,F')}} \left|\sum_{i=1}^n (f(x_i + x_0) - f(y_i + x_0))(z_i)\right| \\
&= \sup_{f \in B_{\I(E,F')}} \left|\sum_{i=1}^n (f^{x_0}(x_i) - f^{x_0}(y_i))(z_i)\right| \\
&= \sup_{f \in B_{\I(E,F')}} \left|\left\langle f^{x_0}, \sum_{i=1}^n \delta_{x_i, y_i} \boxtimes z_i \right\rangle\right| \\
&= \alpha\left(\sum_{i=1}^n \delta_{x_i, y_i} \boxtimes z_i\right).
\end{aligned}
\]
\end{proof}

Now we are ready to prove Theorem~\ref{Thm: Main}. The proof follows the same pattern as \cite[Theorem~7.2]{BL}.

\begin{proof}[Proof of Theorem~\ref{Thm: Main}]
First, we assume that $E$ is a finite dimensional Banach space and write (algebraically) $E=E_{0}\oplus E_{1}$. Let $\varphi\geq 0$ be a $C^{\infty}$ function with compact support on $E_{0}$ such that $\int_{E_{0}}\varphi=1$ and $\varphi(x)=\varphi(-x)$ for all $x\in E_{0}$. For $z\in E$, we define
\begin{align*}
g(z)=\int_{E_{0}}f(z+x)\varphi(x)dx.
\end{align*}
We have $g(0)=\int_{E_{0}}f(x)\varphi(x)dx=\int_{E_{0}}T(x)\varphi(x)dx=0$ because $T(x)\varphi(x)=-T(-x)\varphi(-x)$. Since $f$ is Lipschitz, $g$ is Lipschitz as well. 

To show that $g\in \I(E,F)$ with $\|g\|_{\I}\leq \|f\|_{\I}$, note that since $\I$ is maximal, by \cite[Corollary~5.3]{CPCDJVVV2} it is regular. Thus it suffices to show that $k_{F}\circ g\in \I(E,F'')$ with $\|k_{F}\circ g\|_{\I}\leq \|k_F\circ f\|_{\I}$, where $k_F\colon F\rightarrow F''$ is the canonical inclusion. 

By linearity of $k_{F}$, we have for $z\in E$
\begin{align*}
(k_{F}\circ g)(z)=\int_{E_{0}}(k_{F}\circ f)(z+x)\varphi(x)dx.
\end{align*}

By \cite[Theorem~5.1]{CPCDJVVV2}, there exists a finitely generated Lipschitz tensor norm $\alpha$ such that $\I(E,F'')=(E\widehat{\boxtimes}_{\alpha} F')'$. Take $u=\sum_{i=1}^{n}\delta_{x_{i},y_{i}}\boxtimes v_{i}'$ in $E\boxtimes F'$, where $x_{i},y_{i}\in E$ and $v_{i}'\in F'$ for $i=1,\dots,n$. We have
\begin{align*}
|\left\langle k_{F}\circ g,u\right\rangle|&=\left|\sum_{i=1}^{n}\Big\langle (k_{F}\circ g)(x_{i})-(k_{F}\circ g)(y_{i}), v_{i}'\Big\rangle\right|
\\
&=\left|\sum_{i=1}^{n}\left\langle \int_{E_{0}}\Big((k_{F}\circ f)(x_{i}+x)-(k_{F}\circ f)(y_{i}+x)\Big)\varphi(x) dx, v_{i}'\right\rangle\right|
\\
&=\left|\int_{E_{0}}\left(\sum_{i=1}^{n}\left\langle \Big((k_{F}\circ f)(x_{i}+x)-(k_{F}\circ f)(y_{i}+x)\Big), v_{i}'\right\rangle\right) \varphi(x)dx\right|
\\
&\leq \int_{E_{0}}\left|\left\langle k_{F}\circ f, \sum_{i=1}^{n}\delta_{x_{i}+x, y_{i}+x}\boxtimes v_{i}'\right\rangle\right|\varphi(x)dx
\\
&\leq \|k_F\circ f\|_{\I}\int_{E_{0}}\alpha\left(\sum_{i=1}^{n}\delta_{x_{i}+x, y_{i}+x}\boxtimes v_{i}'\right)\varphi(x)dx
\\
&=\|k_F\circ f\|_{\I} \alpha(u),
\end{align*}
where the last equality follows from Lemma~\ref{LemmaTranslationInvariance} and the fact that $\int_{E_0}\varphi(x) dx=1$. Thus $g\in \I(E,F)$ with $\|g\|_{\I}\leq \|f\|_{\I}$.

Now, given $z\in E$ and a direction $y\in E_{0}$, setting $u=ty+x$ where $x\in E_{0}$, we obtain
\begin{align*}
g(z+ty)=\int_{E_{0}}f(z+ty+x)\varphi(x)dx=\int_{E_{0}}f(z+u)\varphi(u-ty)du
\end{align*}
and consequently
\begin{align*}
\frac{g(z+ty)-g(z)}{t}=\int_{E_{0}}f(z+u)\left(\frac{\varphi(u+t(-y))-\varphi(u)}{t}\right)du.
\end{align*}

Since $\varphi$ is a $C^{\infty}$ function with compact support on $E_{0}$, by the Dominated Convergence Theorem for the Bochner integral (see \cite[Theorem II.2.3]{DieUl}), the directional derivative of $g$ at $z$ in the direction $y$ exists, will be denoted by $\frac{\partial g}{\partial y}(z)$, and satisfies
\begin{align}
\label{eq:directional_derivative}
\frac{\partial g}{\partial y}(z)=-\int_{E_{0}}f(z+x)D\varphi(x)(y)\; dx,
\end{align}
where $z\in E$ and $y\in E_{0}$. Note that this expression is continuous in $z\in E$ and linear in $y\in E_{0}$.

Additionally, for every $y\in E_{0}$, we have
\begin{align*}
g(y) &= \int_{E_{0}}f(y+x)\varphi(x)dx \\
&= \int_{E_{0}}T(y+x)\varphi(x)dx 
\\
&= T(y)\int_{E_{0}}\varphi(x)dx + \int_{E_{0}}T(x)\varphi(x)dx \\
&= T(y).
\end{align*}
In summary, we obtain $g\in \I(E,F)$ with $\|g\|_{\I}\leq \|f\|_{\I}$, $g|_{E_0}=T$, and it satisfies \eqref{eq:directional_derivative}.

Now, let $\psi\geq 0$ be a $C^{\infty}$ function with compact support on $E_{1}$ such that $\int_{E_{1}}\psi=1$, and define for $z\in E$
\begin{align*}
f_{n}(z) = \int_{E_{1}}g\left(z+\frac{u}{n}\right)\psi(u)du
\end{align*}
and for $n \in \mathbb N$
\begin{align*}
h_n(z) \colon= f_{n}(z)-f_{n}(0) &= \int_{E_{1}}\left(g\left(z+\frac{u}{n}\right)-g\left(\frac{u}{n}\right)\right)\psi(u)du \\
&= \int_{E_{1}}g^{\frac{u}{n}}(z)\psi(u)du.
\end{align*}
Since $\I$ is invariant under translations, $g^{\frac{u}{n}}\in \I(E,F)$ with $\|g^{\frac{u}{n}}\|_{\I}\leq \|g\|_{\I}$ for all $u\in E_{1}$ and $n\in \mathbb{N}$. As before, we obtain $h_n\in \I(E,F)$ with $\|h_n\|_{\I} \leq \|g\|_{\I}$. 

We now show that $h_n$ is Gateaux differentiable at $z=0$, for every $n \in \mathbb N$. Let $y=y_0+y_1$ where $y_{0}\in E_{0}$ and $y_{1}\in E_{1}$. Setting $w=t y_{1}+\frac{u}{n}$ for $u\in E_{1}$, we have
\begin{align*}
f_{n}(ty) &= \int_{E_{1}}g(t y_{1}+ty_{0}+\frac{u}{n})\psi(u)du \\
&=n \int_{E_{1}}g(w+t y_{0})\psi(nw-t n y_{1})  dw
\end{align*}
and setting $w=\frac{u}{n}$
\begin{align*}
f_{n}(0) &= \int_{E_{1}}g\left(\frac{u}{n}\right)\psi(u)du = n\int_{E_{1}}g(w)\psi(n w)dw.
\end{align*}
Therefore,
\begin{align*}
\frac{h_n(ty)}{t} &= \frac{f_{n}(ty)-f_{n}(0)}{t} \\
&= \frac{n}{t}\int_{E_{1}}(g(u+t y_{0})\psi(n u-t n y_{1})-g(u)\psi(nu))du \\
&= n\int_{E_{1}}\psi(n u-t n y_{1})\left(\frac{g(u+t y_{0})-g(u)}{t}\right)du 
+ n\int_{E_{1}}g(u)\left(\frac{\psi(n u-t n y_{1})-\psi(nu)}{t}\right)du.
\end{align*}
Again, by the Dominated Convergence Theorem, we obtain
\begin{equation}\label{Gateaux}
\lim_{t\rightarrow 0}\frac{h_n(ty)}{t} = n\int_{E_{1}}\psi(nu)\frac{\partial g}{\partial y_{0}}(u)du + n\int_{E_{1}}g(u)D\psi(nu)(-n y_{1})du.
\end{equation}
The right-hand side expression is linear in $(y_{0},y_{1})\in E_{0}\oplus E_{1}=E$, and since $E$ is a finite dimensional space, it defines a bounded linear operator from $E$ to $F$. Consequently, $h_{n}$ is Gateaux differentiable at $z=0$ fro every $n\in \mathbb N$. Also, since $\I$ is maximal, by \cite[Lemma 4.7]{AlTur}, we have that the Gateaux derivative of $h_n$ at $0$, $Dh_n(0)$, belongs to $ \I\cap \L(E,F)$ with $\|Dh_n(0)\|_{\I}\leq \|h_{n}(\cdot)\|_{\I}\leq \|g\|_{\I}$.

Consider the operators $k_F\circ Dh_n(0)\in \|g\|_{\I} B_{\I\cap \L(E,F'')}$. By \cite[Lemma 4.1]{AlTur}, $\I\cap \L(E, F'')$ is a maximal Banach operator ideal, and by the Representation Theorem for Maximal Operator Ideals, it is a dual space. Thus, by passing to a subnet if necessary, there exists $R \in \I\cap \L(E,F'')$ with $\|R\|_{\I\cap \L}\leq \|g\|_{\I}$ such that $R=w^{*}\text{-}\lim_{n\to \infty} k_F\circ Dh_n(0)$. Equivalently, for every $x\in E$ and $y'\in F'$, $R(x)(y')=\lim_{n\to \infty} \Big(k_F\circ Dh_n(0)(x)\Big)(y')$. 

We now show that $R|_{E_{0}}=T$. For $y_0\in E_{0}$, first by \eqref{Gateaux} we have
\begin{align*}
Dh_{n}(0)(y_0) =\int_{E_{1}}\frac{\partial g}{\partial y_0}\left(\frac{u}{n}\right)\psi(u)du.
\end{align*}
On the other hand,  since $g|_{E_0}=T$, we have $\dfrac{\partial g}{\partial y_0}(0)=Tx$, and therefore for $y' \in F'$:
\begin{align*}
R(y_0)(y')-y'(T(y_0)) &= \lim_{n\to \infty}(Dh_{n}(0)(y_0)-Ty_0)(y') \\
&= \lim_{n\to \infty}\int_{E_{1}}\left(\frac{\partial g}{\partial y_0}\left(\frac{u}{n}\right)-\frac{\partial g}{\partial y_0}(0)\right)(y')\psi(u) du.
\end{align*}
By the continuity of $\dfrac{\partial g}{\partial y_0}(\cdot)$ and the compact support of $\psi$ we have 
\begin{align*}
\lim_{n\to \infty}\int_{E_{1}}\left(\frac{\partial g}{\partial y_0}\left(\frac{u}{n}\right)-\frac{\partial g}{\partial y_0}(0)\right)(y')\psi(u) du &= 0.
\end{align*}
Since $y' \in F'$ was arbitrary, we conclude that $k_F\circ Ty_0=Ry_0$. This completes the proof for the finite dimensional case.

For the general case, denote by $\text{FIN}(E)$ the family of all finite-dimensional subspaces of $E$, ordered by inclusion. For each $G\in \text{FIN}(E)$, by the finite-dimensional case, there exists $R_{G}\in \I\cap \L(G,F'')$ such that $R_{G}|_{G\cap E_{0}}=T|_{G\cap E_{0}}$ and $\|R_{G}\|_{\I\cap \L}\leq \|f|_G\|_{\I}\leq \|f\|_{\I}$ (hence $\|R_{G}\|\leq \|f\|_{\I}$). Define $h_{G}\colon E\rightarrow F''$ by:
\begin{align*}
h_{G}x = \begin{cases}
R_{G}x & \text{if }x\in G \\
0 & \text{if }x\notin G.
\end{cases}
\end{align*}

For each $x\in E$, we have $\|h_{G}x\|\leq \|f\|_{\I}\|x\|$. Fix an ultrafilter $\mathcal{U}$ on $\text{FIN}(E)$ containing the order filter. By the $w^{*}$-compactness of the closed ball of $F''$, the limit
\begin{align*}
Rx \colon= w^{*}\text{-}\lim_{\mathcal{U}}h_{G}x
\end{align*}
exists for every $x\in E$. We verify that $R\colon E\rightarrow F''$ is linear and extends $T$. For $x,y\in E$ and all $G\in \text{FIN}(E)$ containing $\text{span}\{x,y\}$:
\begin{align*}
h_{G}(x+y) = R_{G}(x+y) = R_{G}x + R_{G}y = h_{G}x + h_{G}y,
\end{align*}
which implies $R(x+y)=Rx+Ry$. For $x\in E_{0}$ and all $G\in \text{FIN}(E)$ containing $x$:
\begin{align*}
h_{G}x = R_{G}x = k_F\circ Tx,
\end{align*}
hence $Rx=k_F\circ Tx$.

To conclude, we show $R\in \I\cap \L(E,F'')$ with $\|R\|_{\I\cap \L}\leq \|f\|_{\I}$. Since $\I\cap \L$ is a maximal Banach operator ideal, there exists a finitely generated tensor norm $\beta$ such that $\I\cap \L(E,F'')=(E\widehat \otimes_{\beta}F')'$. Fix $u\in E\otimes F'$ with representation $u=\sum_{i=1}^{n}x_{i}\otimes y_{i}'$. Define, for $G\in \text{FIN}(E)$
\begin{align*}
t_{G} = \begin{cases}
\beta(u,G\otimes F') & \text{if }\text{span}\{x_{i}\}_{i=1}^n\subseteq G \\
0 & \text{if } \text{span}\{x_{i}\}_{i=1}^n\nsubseteq G
\end{cases}
\end{align*}
where $\beta(u,G\otimes F')$ is the $\beta$-norm of $u$ in $G\otimes F'$. By the metric mapping property of $\alpha$ (see, e.g.\cite[12.1]{DF})
\begin{align*}
\beta(u, G\otimes F')\leq \beta(u, \text{span}\{x_i\}_{i=1}^n\otimes F').
\end{align*}
This implies that the net  $(t_{G})_G$ is bounded, hence $\lim_{\mathcal{U}}t_{G}$ exists. Since $\beta$ is finitely generated, for $\varepsilon>0$ there exists $G_{\varepsilon}\in \text{FIN}(E)$ with $\beta(u, G_{\varepsilon}\otimes F') \leq \beta(u,E\otimes F')+\varepsilon$. Let $\widetilde G_{\varepsilon}=\text{span}\{G_{\varepsilon} \cup \{x_i\}_{i=1}^n\}$. Then
\begin{align*}
\beta(u,E\otimes F') &\leq \beta(u,\widetilde G_{\varepsilon} \otimes F') \\
&\leq \beta(u,G_{\varepsilon}\otimes F') \\
&\leq \beta(u,E\otimes F')+\varepsilon.
\end{align*}
For all $G_{1}\supset \widetilde G_{\varepsilon}$ we obtain
\begin{align*}
\beta(u, E\otimes F') \leq t_{G_{1}} \leq \beta(u, E\otimes F')+\varepsilon,
\end{align*}
implying that
\begin{align*}
\beta(u, E\otimes F') \leq \lim_{\mathcal{U}}t_{G} \leq \beta(u,E\otimes F')+\varepsilon.
\end{align*}
Since $\varepsilon>0$ is arbitrary we conclude that $\lim_{\mathcal{U}}t_{G}=\beta(u, E\otimes F')$. Finally, for all finite dimensional subspace $G$ or $E$ containing $\text{span}\{x_{i}\}_{i=1}^n$
\begin{align*}
\left|\sum_{i=1}^{n}\langle h_{G}x_{i},y_{i}'\rangle\right| &= \left|\sum_{i=1}^{n}\langle R_{G}x_{i},y_{i}'\rangle\right| \\
&= |\langle R_{G}, u\rangle| \\
&\leq \|R_{G}\|_{\I\cap \L}\alpha(u, G\otimes F') \\
&\leq \|f\|_{\I}t_{G}.
\end{align*}
Taking limits over $\mathcal{U}$ on the both sides of the inequality. we obtain
\begin{align*}
|\langle R,u\rangle| \leq \|f\|_{\I}\beta(u, E\otimes F').
\end{align*}
This shows $R\in \I\cap \L(E,F'')$ with $\|R\|_{\I\cap \L}\leq \|f\|_{\I}$, completing the proof.
\end{proof}
A consequence of Theorem~\ref{Thm: Main} is Theorem~\ref{Thm: coro}.
\begin{proof}[Proof of Theorem~\ref{Thm: coro}]
The implication (1) $\Rightarrow$ (2) is immediate, since any linear extension of $T$ in $\A$ automatically belongs to every Banach Lipschitz operator ideal $\I$ satisfying $\I \cap \L = \A$. 

The implication (2) $\Rightarrow$ (3) follows from the existence of suitable Lipschitz ideals. Specifically, the Lipschitz ideal $\I = \A \circ \Lip_{0}$ is maximal by \cite[Theorem~4.1]{TV}, invariant under translations by \cite[Example~4.4]{AlTur} and satisfies $\I \cap \L = \A$ by \cite[Theorem~3.3]{TV}.

Finally, the implication (3) $\Rightarrow$ (1) is a direct consequence of Theorem~\ref{Thm: Main}, which provides the required linear extension.
\end{proof}

\section{Final remarks}\label{Sec: Improve}

Regarding Theorem~\ref{Thm: coro}, we observe that statement (2) requires two key properties of the Banach Lipschitz ideal: maximality and invariance under translations.

Let $E$ and $F$ be Banach spaces, $x_0 \in E$, $\I$ a Banach Lipschitz operator ideal, and $f \in \I(E, F)$. Denote by $\tau_{x_0}$ the translation by $x_0$, that is, the (affine) function $\tau_{x_0} \colon E \to E$ defined by $\tau_{x_0}(x) = x + x_0$. That $\I$ is invariant under translations means that for every $f \in \I(E, F)$, the composition $\tau_{-f(x_0)} \circ f \circ \tau_{x_0}$ also belongs to $\I(E, F)$.  Note that $\tau_{x_0} \notin \L(E, E)$ since it does not preserve the origin. Consequently, the ideal property by itself does not guarantee that $\tau_{-f(x_0)} \circ f \circ \tau_{x_0} \in \I(E, F)$. However, as far as we know, every Banach Lipschitz operator ideals satisfy this property.

Note that in the proof of Theorem~\ref{Thm: coro}, the implications (1) $\Rightarrow$ (2) and (2) $\Rightarrow$ (3) do not require that $\I$ being maximal. Thus, for Banach spaces $E$ and $F$, a subspace of $E$, $E_0$, and $T \in \L(E_0, F)$, if there exists $f \in \I(E, F)$ extending $T$, then $T$ admits an extension in $\I^{\max}$ (since $\I \subset \I^{\max}$) and by the implication (3) $\Rightarrow$ (1) of Theorem~\ref{Thm: coro}, $T$ has a linear extension in $\I^{\max} \cap \L$. However, despite of $\I \cap \L$ is maximal, we cannot conclude that $\I^{\max} \cap \L = \I \cap \L$. This leads to:

\begin{proposition}\label{Thm: coro2}
Let $E,F$ be Banach spaces, $E_0 \subset E$ a subspace, and $\A$ a maximal Banach operator ideal. For $T \in \A(E_0, F')$ and $\I$ a Banach Lipschitz operator ideal which is invariant under translations  with $\I^{\max} \cap \L = \I \cap \L = \A$, the following are equivalent:
\begin{enumerate}
    \item There exists $R \in \A(E, F')$ extending $T$ with $\|R\|_{\A} = \|T\|_{\A}$.
    \item There exists $f \in \I(E, F')$ extending $T$ with $\|f\|_{\I} = \|T\|_{\A}$.
\end{enumerate}
\end{proposition}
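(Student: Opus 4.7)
The structure of the proposition is that $\I$ is a priori not maximal, but the hypothesis $\I^{\max}\cap\L = \I\cap\L = \A$ allows us to pass to $\I^{\max}$ without leaving $\A$ at the linear level. So my plan is to reduce (2) $\Rightarrow$ (1) to Theorem~\ref{Thm: Main} applied to $\I^{\max}$, while (1) $\Rightarrow$ (2) should be essentially trivial.

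For (1) $\Rightarrow$ (2), I would just take $f = R$. Since $R\in \A(E,F') = \I\cap \L(E,F') \subset \I(E,F')$ and the norms on $\I\cap\L$ and $\A$ agree, $R$ itself is a Lipschitz extension in $\I$ with $\|f\|_{\I} = \|R\|_{\A} = \|T\|_{\A}$.

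For (2) $\Rightarrow$ (1), the plan is as follows. Given $f\in \I(E,F')$ extending $T$ with $\|f\|_{\I} = \|T\|_{\A}$, the inclusion $\I\subset \I^{\max}$ (with $\|\cdot\|_{\I^{\max}}\leq \|\cdot\|_{\I}$) puts $f$ into $\I^{\max}(E,F')$ with $\|f\|_{\I^{\max}}\leq \|T\|_{\A}$. By Lemma~\ref{Lemma: IUT}, $\I^{\max}$ is invariant under translations, and it is maximal by construction. Applying Theorem~\ref{Thm: Main} with target $F'$ in place of $F$ yields a linear operator $R_0\in \I^{\max}\cap \L(E,F''')$ extending $k_{F'}\circ T$ with $\|R_0\|_{\I^{\max}}\leq \|f\|_{\I^{\max}}\leq \|T\|_{\A}$. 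Now I would post-compose with the canonical norm-one projection $P := (k_F)'\colon F'''\to F'$, which satisfies $P\circ k_{F'} = \mathrm{Id}_{F'}$, to land in $F'$. Setting $R := P\circ R_0$, the ideal property gives $R\in \I^{\max}\cap \L(E,F')$ with $\|R\|_{\I^{\max}}\leq \|R_0\|_{\I^{\max}}\leq \|T\|_{\A}$, and $R|_{E_0} = P\circ k_{F'}\circ T = T$.

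To conclude, I use the hypothesis $\I^{\max}\cap \L = \A$ (with equal norms), so $R\in \A(E,F')$ with $\|R\|_{\A}\leq \|T\|_{\A}$; the reverse inequality is automatic from the fact that $R$ extends $T$ (so $T = R\circ \iota$ for the inclusion $\iota\colon E_0\hookrightarrow E$ and the ideal property yields $\|T\|_{\A}\leq \|R\|_{\A}$). This gives equality of norms. The only delicate point is the codomain mismatch produced by Theorem~\ref{Thm: Main} (it forces a second bidual), but the projection $P$ handles it cleanly precisely because $F'$ is a dual space; this is the same device implicit in the (3) $\Rightarrow$ (1) direction of Theorem~\ref{Thm: coro}, and is the only place in the argument where the values being in a dual space $F'$ rather than in $F$ is really used.
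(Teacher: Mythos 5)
Your proof is correct and follows essentially the same route as the paper: pass from $\I$ to $\I^{\max}$ (which is maximal and, by Lemma~\ref{Lemma: IUT}, still invariant under translations), use the hypothesis $\I^{\max}\cap\L=\A$, and invoke Theorem~\ref{Thm: Main}. The paper simply delegates this last step to the implication (3) $\Rightarrow$ (1) of Theorem~\ref{Thm: coro}, whereas you inline it and make explicit the norm-one projection $(k_F)'\colon F'''\to F'$ that the dual-valued hypothesis provides; that is a welcome clarification but not a different argument.
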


\begin{proof}
That (1) $\Rightarrow$ (2) follows identically to Theorem~\ref{Thm: coro}.

Now suppose that (2) holds, then the function $f$ is an extension that, in particular, belongs to $\I^{\max}$, which is invariant under translations by Lemma~\ref{Lemma: IUT}. Since $\I^{\max} \cap \L = \A$ by hypothesis, an application of Theorem~\ref{Thm: coro} completes the proof.
\end{proof}

As far as we know, all Banach Lipschitz operator ideals satisfy the hypotheses of Proposition~\ref{Thm: coro2}.

\subsection*{Acknowledgments} N. Albarrac\'{\i}n and P. Turco were supported in part by CONICET PIP 112202001101609CO. N. Albarrac\'{\i}n is also supported by a CONICET doctoral fellowship. P. Turco is also supported UBACyT 20020220300242BA.

\end{document}